\documentclass[11pt,english]{article}
\usepackage[margin= 2 cm,bottom=23mm, top= 20mm]{geometry}

\usepackage{amsthm}
\usepackage{amsmath}
\usepackage{amssymb}
\usepackage{setspace}
\usepackage{mathtools}
\usepackage{verbatim}
\usepackage{csquotes}
\usepackage[hidelinks]{hyperref}
\usepackage{thm-restate}
\usepackage{cleveref}
\usepackage{enumitem}
\setlist[itemize]{leftmargin=*} 
\setlist[enumerate]{leftmargin=*}
\usepackage{framed}
\usepackage{floatrow}
\usepackage[T1]{fontenc}
\usepackage{bbm}
\usepackage[color=Orange!50!white, textwidth=22mm]{todonotes}

\floatsetup{ 
  heightadjust=object,
  valign=c
}



\usepackage{mathdots}
\usepackage{xcolor}
\usepackage{diagbox}
\usepackage{colortbl}

\usepackage{graphicx}
\usepackage{subcaption}

\setlength{\parskip}{\smallskipamount}
\setlength{\parindent}{0pt}

\addtolength{\intextsep}{1pt} 
\addtolength{\abovecaptionskip}{3pt}
\addtolength{\belowcaptionskip}{-6pt}
\captionsetup{width=0.95\textwidth, labelfont=bf, parskip=5pt}

\setstretch{1.035}

\theoremstyle{plain}

\newtheorem{theorem}{Theorem}[section]
\newtheorem{claim}[theorem]{Claim}

\newtheorem{lemma}[theorem]{Lemma}

\theoremstyle{definition}

\newtheorem*{defn*}{Definition}

\expandafter\def\expandafter\normalsize\expandafter{%
    \normalsize
    \setlength\abovedisplayskip{4pt}
    \setlength\belowdisplayskip{4pt}
    \setlength\abovedisplayshortskip{4pt}
    \setlength\belowdisplayshortskip{4pt}
}

\usepackage[square,sort,comma,numbers]{natbib}
\setlength{\bibsep}{1 pt plus 20 ex}

\newcommand{\calF}{\mathcal{F}}

\newcommand{\calG}{\mathcal{G}}

\DeclareMathOperator{\E}{\mathbb{E}}

\newcommand{\Bin}{\mathrm{Bin}}

\newcommand{\unq}{\subseteq^*}
\newcommand{\unqembed}{\rightarrow^*}
\renewcommand{\Pr}{\mathbb{P}}

\title{Unique subgraphs are rare}
\author{Domagoj Brada\v{c}\thanks{Department of Mathematics, ETH Z\"urich, Z\"urich, Switzerland. Research supported in part by SNSF grant 200021-228014. Email: \textbf{domagoj.bradac@math.ethz.ch}}\and
Micha Christoph\thanks{Department of Mathematics, ETH Z\"urich, Z\"urich, Switzerland. Research supported in part by SNSF Ambizione grant 216071. Email: \textbf{micha.christoph@math.ethz.ch}}
}  

\date{}

\begin{document}

\maketitle
\begin{abstract}
    A folklore result attributed to P\'olya states that there are $(1 + o(1))2^{\binom{n}{2}}/n!$ non-isomorphic graphs on $n$ vertices. Given two graphs $G$ and $H$, we say that $G$ is a unique subgraph of $H$ if $H$ contains exactly one subgraph isomorphic to $G$. For an $n$-vertex graph $H$, let $f(H)$ be the number of non-isomorphic unique subgraphs of $H$ divided by $2^{\binom{n}{2}}/n!$ and let $f(n)$ denote the maximum of $f(H)$ over all graphs $H$ on $n$ vertices. In 1975, Erd{\H{o}}s asked whether there exists $\delta>0$ such that $f(n)>\delta$ for all $n$ and offered $\$100$ for a proof and $\$25$ for a disproof, indicating he does not believe this to be true. We verify Erd\H{o}s' intuition by showing that $f(n)\rightarrow 0$ as $n$ tends to infinity, i.e. no graph on $n$ vertices contains a constant proportion of all graphs on $n$ vertices as unique subgraphs.
\end{abstract}
\section{Introduction}

Given a family of graphs $\mathcal{F},$ we say that a graph $H$ is (induced) universal for $\mathcal{F}$ if it contains every graph in $\calF$ as an (induced) subgraph. The study of universal graphs was initiated by Rado~\cite{rado1964universal} who rediscovered an old construction of Ackermann~\cite{ackermann1937widerspruchsfreiheit} of a countable graph which contains all countable graphs as induced subgraphs. Since then, the focus has been on finite families $\mathcal{F}$ and constructing small universal graphs, either with few vertices or with few edges.

Alon~\cite{alon2017asymptotically} constructed a graph on $(1+o(1))2^{(k-1)/2}$ vertices which is induced universal for the family of all graphs on $k$ vertices and a simple counting argument shows this is tight up to an $(1+o(1))$-factor. Most of the remaining work has focused on (not induced) universal graphs for families of sparse graphs such as graphs with bounded maximum degree~\cite{Alon2002SparseUG, Alon2007SparseUG, Alon2008OptimalUG, Alon2000UniversalityAT}, forests~\cite{Chung1983OnUG, Friedman1987ExpandingGC, Graham1978OnGW} and graphs with bounded degeneracy~\cite{allen2023universality, nenadov2016ramsey}. Many of these results have recently been unified by Alon, Dodson, Jackson, McCarty, Nenadov and Southern~\cite{alonuniversality} and we refer to their paper and the references therein for more details about the history of universal graphs.

It should be mentioned that for many natural families $\calF$, a tight lower bound on the size of the universal graph is given by simple counting arguments. Moreover, these arguments often imply that if a graph is significantly smaller than the given lower bound, then in fact it contains a $o(1)$-proportion of the members of $\calF$ as (induced) subgraphs.

We move on to defining the problem which is the main topic of this paper. An asymptotic formula for the number of non-isomoprhic graphs on $n$ vertices is a folklore result often attributed to P\'olya. It seems that P\'olya himself never explicitly wrote this down but see, for example, the argument of Wright \cite{wright1971graphs} from 1971 for an estimate of the number of unlabelled graphs on $n$ vertices with exactly $m$ edges, which also implies the following.
\begin{theorem}[P\'olya; Wright] \label{thm:polya}
    The number of non-isomorphic graphs on $n$ vertices is $(1 + o(1))\frac{2^{\binom{n}{2}}}{n!}.$    
\end{theorem}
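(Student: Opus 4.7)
Denote by $g(n)$ the number of non-isomorphic graphs on $n$ vertices; the lower bound $g(n) \geq 2^{\binom{n}{2}}/n!$ is immediate, since each isomorphism class of $n$-vertex graphs contains at most $n!$ labeled representatives. The matching upper bound reflects the heuristic that ``typical'' graphs have trivial automorphism group, and my plan is to obtain it via the Cauchy--Frobenius (Burnside) lemma applied to the natural action of $S_n$ on the $2^{\binom{n}{2}}$ labeled graphs by vertex relabeling. Since the orbits of this action are precisely the isomorphism classes,
\[
    g(n) \;=\; \frac{1}{n!} \sum_{\sigma \in S_n} |\mathrm{Fix}(\sigma)|,
\]
where a labeled graph belongs to $\mathrm{Fix}(\sigma)$ iff its edge set is a union of $\sigma$-orbits on $\binom{[n]}{2}$. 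Writing $r(\sigma)$ for the number of such orbits gives $|\mathrm{Fix}(\sigma)| = 2^{r(\sigma)}$; the identity term contributes exactly $2^{\binom{n}{2}}$, and it suffices to prove $\sum_{\sigma \neq \mathrm{id}} 2^{r(\sigma)} = o(2^{\binom{n}{2}})$, equivalently that the deficit $d(\sigma) := \binom{n}{2} - r(\sigma)$ is large for non-identity $\sigma$ in a summable sense.

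Next I would partition the sum by $m = m(\sigma)$, the number of moved points, and lower-bound $d(\sigma)$ using the closed form
\[
    d(\sigma) \;=\; \sum_i \Bigl(\tbinom{l_i}{2} - \lfloor l_i/2 \rfloor\Bigr) \;+\; \sum_{i<j} \bigl(l_i l_j - \gcd(l_i, l_j)\bigr),
\]
where $(l_i)$ are the cycle lengths of $\sigma$ (including $1$-cycles for fixed points). Three complementary sources of deficit arise: (i) mixed pairs with one fixed and one moved endpoint contribute $(n-m)(m-k) \geq (n-m)m/2$, where $k \leq m/2$ is the number of cycles on moved points; (ii) each pair of moved $2$-cycles contributes $2$ to the inter-cycle sum; and (iii) each moved cycle of length $\ell \geq 3$ contributes $\binom{\ell}{2} - \lfloor \ell/2 \rfloor = \Omega(\ell^2)$ to the intra-cycle sum. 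These combine to give $d(\sigma) = \Omega(nm)$ when $m \leq n/2$ and $d(\sigma) = \Omega(n^2)$ when $m > n/2$. Since there are at most $n^m$ permutations with $m$ moved points, the contribution from $m \leq n/2$ is bounded by $\sum_m n^m 2^{-\Omega(nm)} = o(1)$, while the contribution from $m > n/2$ is at most $n! \cdot 2^{-\Omega(n^2)} = 2^{O(n\log n) - \Omega(n^2)} = o(1)$.

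I expect the main technical obstacle to be the regime of permutations consisting predominantly of disjoint $2$-cycles, since there the intra-cycle contribution vanishes entirely and one must combine only the mixed-pair $(n-m)m/2$ and the inter-cycle $m(m-2)/4$ terms to obtain the requisite $d(\sigma) = \Omega(n^2)$ bound. This case is delicate because the number of permutations of this cycle type is of order $n!/(2^{m/2}(m/2)!)$, which is only subexponentially smaller than $n!$; verifying that this count times $2^{-d(\sigma)}$ is summably small requires a Stirling-type estimate to confirm that the super-polynomial decay $2^{-\Omega(n^2)}$ beats the super-polynomial count. Once this case and the related cases with many short cycles are handled, cycle types with a long cycle (dispatched by (iii)) and with few moved points (dispatched by (i)) are comparatively routine, completing the proof.
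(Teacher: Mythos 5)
The paper does not prove this statement at all: it is quoted as a classical result and attributed to P\'olya, with a pointer to Wright's 1971 paper for a proof, so there is nothing in the text to compare your argument against line by line. Judged on its own, your proposal is the standard and correct proof. The Burnside identity $g(n)=\frac{1}{n!}\sum_\sigma 2^{r(\sigma)}$, the formula $d(\sigma)=\sum_i(\binom{l_i}{2}-\lfloor l_i/2\rfloor)+\sum_{i<j}(l_il_j-\gcd(l_i,l_j))$, and the split by the number $m$ of moved points are all right, and the two regimes do work out: for $m\le n/2$ the mixed-pair term alone gives $d(\sigma)\ge (n-m)(m-k)\ge nm/4$, and $\sum_{m\ge 2}n^m2^{-nm/4}=o(1)$; for $m>n/2$ one can avoid your case analysis entirely by noting $l_il_j-\gcd(l_i,l_j)\ge l_il_j/2$ and $\binom{l}{2}-\lfloor l/2\rfloor\ge l(l-2)/2$ for every moved cycle, whence summing over moved cycles gives $d(\sigma)\ge \frac{1}{4}(m^2-\sum l_i^2)+\frac{1}{2}(\sum l_i^2-2m)\ge m^2/4-m=\Omega(n^2)$ uniformly over cycle types, with no special treatment of the all-$2$-cycles case. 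Your worry about needing a Stirling-type estimate there is also unfounded: the crude bound $n!\le n^n=2^{n\log_2 n}$ is already negligible against $2^{-\Omega(n^2)}$. So the proposal is sound and complete in outline; it simply supplies a proof the paper chose to cite rather than give.
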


In 1972, Entringer and Erd{\H{o}}s \cite{entringer1972number} studied the following universality type question. We say that a graph $G$ is a unique subgraph of $H$, denoted by $G \unq H$, if $H$ contains exactly one subgraph isomorphic to $G$, where we remind the reader that a subgraph of a graph is defined as a subset of its vertices and a subset of its edges on that set of vertices. For an $n$-vertex graph $H$, we define 
\[ f(H) := \frac{|\{ G \, \vert \, G \unq H\}|}{2^{\binom{n}{2}} / n!}, \]
and let $f(n)$ be the maximum of $f(H)$ over all graphs $H$ on $n$ vertices. 
Entringer and Erd{\H{o}}s showed that there exists a constant $c$ such that for sufficiently large $n$, $f(n)\geq e^{-cn^{3/2}}$. Harary and Schwenk \cite{harary1973number} subsequently improved the lower bound to $e^{-cn\log n}$. The current best known bound is $f(n)\geq e^{-cn}$, given by Brouwer \cite{brouwer1975number}. In a paper from 1975, Erd{\H{o}}s \cite{erdHos1975problems} asked whether there exists $\delta>0$ such that $f(n)>\delta$ for all $n$ and offered $\$100$ for a proof and $\$25$ for a disproof. The problem also appears as Problem \#426 on Bloom's Erd\H{o}s problems website~\cite{bloom}. We answer this question in the negative.
\begin{theorem}\label{thm: main}
    \[ \lim_{n \rightarrow \infty} f(n) = 0. \]
\end{theorem}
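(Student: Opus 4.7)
The argument proceeds via two reductions followed by a structural analysis. First, by \Cref{thm:polya} the number of iso classes on exactly $k$ vertices is $(1+o(1))a_k$ with $a_k := 2^{\binom{k}{2}}/k!$; since $a_{k+1}/a_k = 2^k/(k+1) \geq 2$ for $k \geq 2$, we have $\sum_{k \leq n-1} a_k = O(a_{n-1}) = o(a_n) = o(2^{\binom{n}{2}}/n!)$, so it suffices to bound $u_n(H)$, the number of non-isomorphic unique \emph{spanning} subgraphs of $H$. Second, since each unique iso class has a distinct representative in $2^{E(H)}$, we have $u_n(H) \leq 2^{e(H)}$; so if $e(H) \leq \binom{n}{2} - \log_2(n!) - \omega(1)$ we are already done. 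Hence I may assume $\bar{m} := \binom{n}{2} - e(H) \leq (1+o(1)) n \log_2 n$, i.e., $\bar H := K_n \setminus H$ is sparse.

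The key structural observation is a \emph{symmetry reduction}. If $F \subseteq E(H)$ is the unique $E(H)$-representative of its iso class, then for every $\sigma \in \Aut(H)$, $\sigma(F) \subseteq E(H)$ is another representative, so $\sigma(F) = F$. Hence $\Aut(H) \subseteq \Aut(F)$, $F$ is a union of $\Aut(H)$-orbits on $E(H)$, and $u_n(H) \leq 2^{k_H}$ where $k_H$ is the orbit count. Let $I := \{v : d_{\bar H}(v) = 0\}$ and $r := n - |I|$; when $r \leq n - \log_2 n - \omega(1)$ (equivalently $|I| \geq \log_2 n + \omega(1)$), using $S_I \subseteq \Aut(H)$ yields $k_H \leq 1 + r + \binom{r}{2} \leq \binom{n}{2} - \log_2(n!) - \omega(1)$, and we conclude. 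Since $r \leq 2\bar m$, this handles the range $\bar m \leq n/2 - \omega(\log n)$.

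The \textbf{main obstacle} is the intermediate regime $\bar m \in (n/2 - \omega(\log n),\, n \log_2 n)$ with $\bar H$ effectively asymmetric; here $\Aut(H)$ can be trivial and the symmetry bound is vacuous. The proposed approach is to exploit \emph{near-automorphisms}: since $\bar H$ is sparse, most pairs $u, v$ satisfy $|N_{\bar H}(u) \triangle N_{\bar H}(v)| = O(\log n)$, so the transposition $\pi = (uv)$ has $d(\pi) := |\{e \in E(H) : \pi(e) \in E(\bar H)\}| = O(\log n)$. For uniform random $F \subseteq E(H)$, $\Pr[\pi(F) \subseteq E(H)] = 2^{-d(\pi)} \geq n^{-O(1)}$, and $\pi(F) \neq F$ for generic $F$, witnessing non-uniqueness. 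The quantitative task is to select a collection of near-twin permutations $\pi_i$ whose witness events are sufficiently near-independent that
\[
    \Pr_F[F \text{ unique}] \leq \exp\!\left(-\sum_i 2^{-d(\pi_i)}\right) = o\!\left(\frac{2^{\bar m}}{n!}\right),
\]
which yields $u_n(H) = 2^{e(H)} \Pr_F[F \text{ unique}] = o(2^{\binom{n}{2}}/n!)$. The two hard points are (i) guaranteeing enough small-support near-automorphism structure in every sparse asymmetric $\bar H$, and (ii) controlling correlations between witness events that share common vertices in $E(H)$. A refined partition of the vertices by approximate $\bar H$-neighborhood equivalence, treated inductively in the complexity of the equivalence class structure, is the angle I would pursue.
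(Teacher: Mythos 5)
Your first three steps are sound and partly different from the paper's route: the reduction to spanning subgraphs, the trivial bound (number of unique isomorphism classes) $\le 2^{e(H)}$ forcing $\bar m := \binom{n}{2}-e(H) \le (1+o(1))\log_2(n!)$, and the $\Aut(H)$-orbit argument handling the case of $\omega(\log n)$ dominating vertices are all correct (the paper instead deals with dominating vertices inside its final lemma). You have also correctly identified the right mechanism for the dense case: transpositions of near-twins witnessing a second copy is exactly the paper's notion of a $\pi$-switch. However, the proof is not complete: the entire intermediate regime is left as a programme with two explicitly unresolved "hard points," and that regime is precisely where all of the work in the actual proof lives (a union bound over bijections combined with Azuma's inequality, after a careful extraction of a large independent set $T$ of low-$\bar H$-degree vertices whose pairs give nearly uncorrelated switch events).

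More seriously, your reduction to the dense case is too weak for the switching strategy to succeed, so the gap is not just unfinished bookkeeping. Take $\bar H$ to be a random (hence asymmetric) graph with $\bar m = 0.9\, n\log_2 n$ edges. You then need $\Pr_F[F\text{ unique}] = o(2^{\bar m}/n!) = e^{-\Omega(n\log n)}$, but every pair $u,v$ has $|N_{\bar H}(u)\triangle N_{\bar H}(v)| = \Theta(\log n)$, so each transposition event has probability $n^{-c}$ for a constant $c>2$ that grows with the density of $\bar H$; summing over all $\binom{n}{2}$ transpositions gives total mass $n^{2-c} = o(1)$, so even under perfect independence you could not get failure probability below a constant, let alone $e^{-\Omega(n\log n)}$. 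The missing idea is the paper's second lemma: using anti-concentration of $e(G)$ for $G\sim\calG(n,1/2)$ (via the random graph process, one shows that if uniqueness held with probability $\delta$ then it would survive the addition of $\Omega(n)$ random edges with probability $\Omega(\delta)$, which forces $\bar m = O_\delta(n)$). Only after this genuinely different, non-counting reduction does one reach a regime where typical switch events have constant probability and a concentration inequality can close the argument. Without some substitute for that step, your plan fails on concrete examples.
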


One way to view \Cref{thm: main} is as an anti-concentration result. Using Theorem~\ref{thm:polya}, it is not hard to show that $f(H)$ is up to an additive $o(1)$ term equal to the probability that a random graph $G \sim \calG(n, 1/2)$ is a unique subgraph of $H$. Given a graph $H$ on $n$ vertices and $G\sim\calG(n,1/2)$, the expected number of embeddings of $G$ into $H$ equals $n!\cdot 2^{-\binom{n}{2}+e(H)}$. If $H$ has the right number of edges then this expectation can be close to $1$. Nonetheless, as our result shows, the probability that there is exactly one copy of $G$ is $o(1)$. Furthermore, a key ingredient of our proof is anti-concentration of the number of edges of $\calG(n, 1/2)$.

This interpretation of our result seems quite similar to the study of the parameter $\mathrm{ind}(k,\ell)$ introduced by Alon, Hefetz, Krivelevich and Tyomkyn \cite{alon2020edge}, where $\mathrm{ind}(k,\ell)$ is defined as the limit, as $n$ goes to infinity, of the maximum fraction of induced subgraphs on $k$ vertices which also have exactly $\ell$ edges over all graphs on $n$ vertices. This parameter has a natural similarity to $f$. Again, one may pick a graph $H$ on $n$ vertices with the appropriate number of edges such that the expected number of edges in any induced subgraph on $k$ vertices is $\ell$. If $0 < \ell < \binom{k}{2}$, anti-concentration comes into play. Confirming conjectures from~\cite{alon2020edge}, in a series of papers, Kwan, Sudakov and Tran \cite{kwan2019anticoncentration}, Fox and Sauermann \cite{fox2018completion} and Martinsson, Mousset, Noever and Truji{\'c} \cite{martinsson2019edge} showed that for all other values of $\ell$, $\mathrm{ind}(k,\ell)$ essentially does not exceed $1/e$ and most recently, with their work on the quadratic Littlewood-Offord problem, Kwan and Sauermann \cite{kwan2023resolution} showed that $\mathrm{ind}(k,\ell)\leq O(1/\sqrt{k})$ for some range of $\ell$, which is asymptotically tight.


\textbf{Notation, terminology and preliminaries.} We use mostly standard notation. For a positive integer $n$, we denote by $\calG(n, 1/2)$, the binomial random graph with edge probability $1/2$. Equivalently, this is a uniformly random labelled graph on $n$ vertices. For $0\leq m\leq \binom{n}{2}$, we denote by $\calG(n,m)$ the uniform distribution on labelled graphs with $n$ vertices and $m$ edges. Additionally, denoting $N = \binom{n}{2}$, the random graph process $\widetilde{G} = (G_m)_{m=0}^N$ is a sequence of labelled graphs $G_0 \subseteq G_1 \subseteq \dots \subseteq G_N$ on $n$ vertices, where $G_0$ is the empty graph and, for $1 \le m \le N,$ the graph $G_m$ is obtained from $G_{m-1}$ by adding a uniformly random edge not in $G_{m-1}$. Note that $|E(G_m)| = m$ and that $G_m$ is distributed as $\calG_{n,m}$.
We denote by $\mathrm{Aut}(G)$ the automorphism group of $G$. Note that for every graph $|\mathrm{Aut}(G)| \geq 1$, since $G$ always has the trivial automorphism. Given two graphs $G$ and $H$, an embedding of $G$ into $H$ is an injective function $\phi \colon V(G) \rightarrow V(H)$ such that $uv \in E(G) \implies \phi(u)\phi(v) \in E(H)$. Similar to the use of $\unq$, we write $G \unqembed H$ if there exists a unique embedding of $G$ into $H$. Throughout the paper, $n$ will denote the number of vertices in a graph $H$ and all our asymptotic notation is for $n$ tending to infinity. For clarity, we avoid the use of floor and ceiling signs whenever they are not crucial to the argument.

\subsection{Proof sketch}
For the sake of contradiction, we assume there is a constant $\delta > 0$ such that $f(n) \ge \delta$ for all $n$. We take a large enough $n$ compared to $\delta$ and consider a graph $H$ on $n$ vertices with $f(H)\geq \delta$.

The proof is split into three steps, formulated in \Cref{lem:can-look-at-random-graph,lem:constant-non-degree,lem: linear is not it}. 

In the proof of \Cref{thm: main}, we have to treat graphs $G$ with more than one automorphism delicately, as, for such $G$, it is possible that $G$ has more than one embedding into $H$ but, nonetheless, $G$ is a unique subgraph of $H$. Using \Cref{thm:polya} and a simple counting argument, Lemma~\ref{lem:can-look-at-random-graph} shows that $f(H)$ is, up to an additive $o(1)$ term, equal to $\Pr_{G \sim \calG(n,1/2)}[G \unqembed H]$. This allows us to look at $G \sim \calG(n,1/2)$ (the uniform distribution on labelled graphs) instead of the uniform distribution on unlabelled graphs on at most $n$ vertices as well as to ignore the delicate case when $G$ has non-trivial automorphisms.

In \Cref{lem:constant-non-degree}, we show that $e(H)\geq \binom{n}{2}-Cn$ for some large enough constant $C = C(\delta)$. Towards this, we observe that $e(G)$, where $G\sim \calG(n,1/2)$, with high probability lies in an interval of size linear in $n$ around $\binom{n}{2}/2$, while being more or less uniformly distributed within this interval. Shortly switching to the random graph process instead of $\calG(n,1/2)$, we use the previous observation and \Cref{lem:can-look-at-random-graph} to conclude that there must exist a graph $G^*$ with $G^*\unqembed H$ and if we, uniformly at random, add $\Omega(n)$ many missing edges to $G^*$ then it still has a unique embedding into $H$ with constant probability. A simple calculation then shows that this implies that $H$ has $O(n)$ non-edges.

Finally, \Cref{lem: linear is not it} states that if $H$ has $O(n)$ non-edges, then the probability that there is a unique embedding of $G\sim\calG(n,1/2)$ into $H$ is $o(1)$. Therefore, this concludes the proof together with \Cref{lem:can-look-at-random-graph}. Note that the expected number of embeddings of $G$ into $H$ is $n! \cdot 2^{\binom{n}{2} - e(H)} \gg 1$ for $H$ with $O(n)$ non-edges. Thus, at first glance, it seems we want to show that with high probability, there are at least two embeddings of $G$ into $H$. However, this need not be the case, for example if $H$ has an isolated vertex. In some sense, the vertices of large non-degree in $H$ are the only obstruction possibly preventing $G$ to have many embeddings into $H$ with high probability. We show that if there is an embedding of $G$ into $H$, then with high probability there are two vertices in $H$ with $O(1)$ non-neighbors whose roles we can switch to obtain a new embedding of $G$. Our proof uses a union bound over all $n!$ initial embeddings and Azuma's inequality to show that for any fixed initial embedding, there exists a way to switch two vertices with probability $1 - o(1/n!)$.


\section{Proof of Theorem~\ref{thm: main}}
\subsection{Reduction to random labelled graphs}
\begin{lemma} \label{lem:can-look-at-random-graph}
    Let $H$ be an $n$-vertex graph. Then, 
    \[ f(H) = \Pr_{G \sim \calG(n,1/2)}[G \unqembed H ] + o(1). \]
\end{lemma}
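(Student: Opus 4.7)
The plan is to split the count of non-isomorphic unique subgraphs of $H$ into a main contribution from $n$-vertex graphs with trivial automorphism group and an error contribution from everything else, then show that the main contribution equals $\Pr_{G\sim\calG(n,1/2)}[G\unqembed H] \cdot 2^{\binom{n}{2}}/n!$ exactly, while the error is $o(2^{\binom{n}{2}}/n!)$.

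For the error, there are two types. First, subgraphs with strictly fewer than $n$ vertices: by \Cref{thm:polya} the number of isomorphism classes on $k\le n-1$ vertices is $O(2^{\binom{k}{2}}/k!)$, and the ratio of this to $2^{\binom{n}{2}}/n!$ equals $n/2^{n-1}$ for $k=n-1$ and shrinks (super-)geometrically as $k$ decreases, so summing over $k$ gives $O(n/2^{n-1}) = o(1)$. Second, $n$-vertex graphs with non-trivial automorphism group: by the classical Erd\H{o}s--R\'enyi asymmetry theorem, asymptotically almost every labelled graph on $n$ vertices has trivial automorphism group, and combining this with \Cref{thm:polya} shows that the number of isomorphism classes on $n$ vertices with $|\Aut(G)| \ge 2$ is also $o(2^{\binom{n}{2}}/n!)$. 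Both error types are therefore negligible for $f(H)$, regardless of how many among them are unique subgraphs of $H$.

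For the main contribution, I would use the orbit-counting identity that the number of embeddings of a graph $G_0$ into $H$ equals $|\Aut(G_0)|$ times the number of subgraphs of $H$ isomorphic to $G_0$. Hence for any $n$-vertex $G_0$ with $|\Aut(G_0)| = 1$ we have $G_0 \unq H \iff G_0 \unqembed H$, and moreover the number of labelled copies of $G_0$ on vertex set $[n]$ equals $n!/|\Aut(G_0)| = n!$, so $\Pr_{G\sim\calG(n,1/2)}[G \cong G_0] = n!/2^{\binom{n}{2}}$. Conversely, if $G \sim \calG(n,1/2)$ satisfies $G \unqembed H$, then $|\Aut(G)| = 1$ (composing the unique embedding with any non-trivial automorphism of $G$ would produce a second embedding), so only isomorphism classes counted by the main contribution are seen by the probability. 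Summing $n!/2^{\binom{n}{2}}$ over these isomorphism classes therefore recovers $\Pr_{G\sim\calG(n,1/2)}[G \unqembed H]$ exactly, and adding the error term yields the claimed identity after dividing by $2^{\binom{n}{2}}/n!$.

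The only conceptual subtlety is the discrepancy between unique subgraphs and unique embeddings for graphs with non-trivial automorphism group: such a $G_0$ can be a unique subgraph of $H$ without being a unique embedding, since all $|\Aut(G_0)|$ embeddings share the same image. This is precisely why we must segregate such graphs and rely on the Erd\H{o}s--R\'enyi asymmetry count to absorb them into the $o(1)$ error; beyond this mild bookkeeping I do not anticipate a serious obstacle.
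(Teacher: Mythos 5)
Your proposal is correct and follows essentially the same route as the paper: split off the graphs on fewer than $n$ vertices and the $n$-vertex classes with non-trivial automorphism group as a negligible error, then identify the trivial-automorphism unique subgraphs with unique embeddings and convert to a probability over labelled graphs. The only (minor) difference is that you bound the number of isomorphism classes with $|\Aut(G)|\ge 2$ by importing the Erd\H{o}s--R\'enyi asymmetry theorem together with \Cref{thm:polya}, whereas the paper derives the same bound self-containedly from the identity $2^{\binom{n}{2}}=\sum_{G}n!/|\Aut(G)|$ and \Cref{thm:polya} alone; both are valid.
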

\begin{proof}
    Let $UG(n)$ denote the set of unlabelled graphs on $n$ vertices and let $LG(n)$ denote the set of all labelled graphs on $n$ vertices. Note that
    \[ 2^{\binom{n}{2}} = |LG(n)| = \sum_{G \in UG(n)} \frac{n!}{|\mathrm{Aut}(G)|} \le n! \cdot |UG(n)| - n!/2 \cdot \big|\{G \in UG(n) \, \vert \, |\mathrm{Aut}(G)| \ge 2 \}\big|. \]

    Using Theorem~\ref{thm:polya} and dividing by $n!$ yields
    \begin{equation} \label{eq:almost-all-have-one-auto}
        \big|\{G \in UG(n) \, \vert \, |\mathrm{Aut}(G)| \ge 2\}\big| = o\left(  \frac{2^{\binom{n}{2}}}{n!} \right).
    \end{equation}
    
    Note that by Theorem~\ref{thm:polya}, the number of unlabelled graphs on at most $n-1$ vertices is $o\left(\frac{2^{\binom{n}{2}}}{n!} \right)$. Using this and~\eqref{eq:almost-all-have-one-auto}, we have

    \begin{align*}
        f(H) &= \frac{n!}{2^{\binom{n}{2}}} \cdot \big|\{G \in UG(n) \, \vert \, G \unq H\}\big| + o(1) 
        \\&= \frac{n!}{2^{\binom{n}{2}}} \cdot 
        \big|\{G \in UG(n) \, \vert \, G \unq H \land |\mathrm{Aut}(G)| = 1\}\big|  +  o(1)\\
        &= 2^{-\binom{n}{2}} \cdot \big| \{ G \in LG(n) \, \vert \, G \unq H \land |\mathrm{Aut}(G)| = 1 \} \big| + o(1) \\&= 
        \Pr_{G \in \calG(n,1/2)}[G \unq H \land |\mathrm{Aut}(G)| = 1] + o(1). \hspace{.3cm}
    \end{align*}
    To conclude the proof, observe that $G\unqembed H$ if and only if $G\unq H$ and $|\mathrm{Aut}(G)| = 1$.
\end{proof}
\subsection{Reduction to the very dense case}
\begin{lemma} \label{lem:constant-non-degree}
    For any $\delta > 0$, there is a constant $C = C(\delta)$ such that the following holds. If $H$ is a graph on $n$ vertices satisfying $\Pr_{G \sim \calG(n,1/2)}[G \unqembed H ]\geq \delta/2$, then $e(H) \ge \binom{n}{2} - Cn$.
\end{lemma}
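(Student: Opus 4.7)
The plan is to transform the hypothesis into a statement about the random graph process $(G_m)_{m=0}^N$ (where $N = \binom{n}{2}$), and then exploit a structural observation about uniquely embedded graphs to deduce a linear bound on $\bar e := N - e(H)$.

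First, I use anti-concentration of $e(G)$. Since $e(G) \sim \mathrm{Bin}(N,1/2)$ for $G \sim \calG(n,1/2)$, Chebyshev's inequality shows that for a sufficiently large constant $K = K(\delta)$ the interval $I := [N/2 - Kn, N/2 + Kn]$ satisfies $\Pr[e(G) \notin I] \le \delta/4$, while $\Pr[e(G) = m] \le \binom{N}{N/2}/2^N = O(1/n)$ for every $m$. Decomposing the assumption $\Pr[G \unqembed H] \ge \delta/2$ by the value of $e(G)$ and combining these two bounds yields $\sum_{m \in I} \Pr_{G \sim \calG(n,m)}[G \unqembed H] \ge \Omega(\delta n)$. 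Passing to the random graph process $(G_m)_m$ (for which $G_m$ is distributed as $\calG(n,m)$), linearity of expectation rewrites this as $\E\bigl[|\{m : G_m \unqembed H\}|\bigr] \ge \Omega(\delta n)$.

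Next, I use a structural observation about the process. Let $e_m$ denote the number of embeddings of $G_m$ into $H$; clearly $(e_m)_m$ is non-increasing. Moreover, once $e_m = 1$ with unique embedding $\phi$, adding a random non-edge $uv$ to $G_m$ either preserves $\phi$ as the unique embedding of $G_{m+1}$ (if $\phi(u)\phi(v) \in E(H)$) or forces $e_{m+1} = 0$ (if $\phi(u)\phi(v) \notin E(H)$, since any embedding of $G_{m+1}$ would restrict to an embedding of $G_m$ and thus equal $\phi$, which is no longer an embedding). Consequently $\{m : G_m \unqembed H\}$ is a (possibly empty) interval $[m_0, M]$ on each sample path.

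Finally, I upper-bound $\E\bigl[|\{m : G_m \unqembed H\}|\bigr]$. Conditioned on $G_{m_0} \unqembed H$ with unique embedding $\phi$, exactly $\bar e$ of the $N - m_0$ non-edges of $G_{m_0}$ are \emph{bad} (their $\phi$-image is a non-edge of $H$), and the remaining steps of the process reveal these $N - m_0$ non-edges in a uniformly random order; hence $M - m_0$ equals the number of good items before the first bad item in a uniformly random permutation of $\bar e$ bad and $(N - m_0) - \bar e$ good items, which has expectation $(N - m_0 - \bar e)/(\bar e + 1) \le N/(\bar e + 1)$. Combining the two bounds gives $\Omega(\delta n) \le 1 + N/(\bar e + 1)$, and rearranging yields $\bar e \le C(\delta) n$ with $C(\delta) = O(1/\delta)$. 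The main subtlety is the interval property established in the previous paragraph, which is what enables the expected-length bound and must be verified carefully from the above dichotomy.
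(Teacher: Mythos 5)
Your proposal is correct and follows essentially the same route as the paper: anti-concentration of $e(G)$ to pass to the random graph process, the monotonicity/interval structure of the set of times $m$ with $G_m \unqembed H$, and the fact that each subsequently added edge is ``bad'' for the unique embedding with probability roughly $\bar e/N$. The only difference is the endgame, where you bound the expected interval length directly via the ``good items before the first bad item'' formula, while the paper instead shows via reverse Markov and a hypergeometric computation that a long interval is unlikely when $\bar e \ge Cn$; both accountings are valid.
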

\begin{proof}
    Fix $\delta > 0$. Clearly, we may assume $n$ is sufficiently large by increasing $C$. Let $G \sim \calG(n, 1/2)$ and let $H$ be an $n$-vertex graph with $\Pr[G \unqembed H ]\geq \delta/2$. 
    By the Chernoff bound (e.g.~\cite{janson-luczak-ruczinski}), there is a constant $L = L(\delta)$ such that $\Pr[| e(G) - \binom{n}{2} / 2| \ge Ln] \leq \delta / 4.$ Let $I = [\binom{n}{2}/2 - Ln, \binom{n}{2} / 2 + Ln].$ Then,
    \[
        \Pr[G \unqembed H \land e(G) \in I] \ge \delta / 4.
    \]

    Set $N = \binom{n}{2}$ and note that by Stirling's approximation, for large enough $n$ and any $t$, \[ \Pr[\Bin(N,1/2) = t] \le \Pr[\Bin(N,1/2) = N/2] \le 1/\sqrt{N}. \] Therefore, $\Pr[e(G) = m] \le 2 / n$ for all $m$. Consider the random graph process $\widetilde{G} = (G_m)_{m=0}^N$ independent of $G$ and let
    \[ X = | \{ m \in I \, \vert \, G_m \unqembed H\}|. \]
    Note that
    \begin{align*}        
        \delta/4 &\le \Pr\big[G \unqembed H \land e(G) \in I\big] = \sum_{m \in I} \Pr\big[e(G) = m\big] \cdot \Pr\big[G_m \unqembed H \big]\\
        &\le \frac{2}{n} \sum_{m \in I} \Pr\big[G_m \unqembed H\big] = \frac{2}{n} \E \big[ X\big].
    \end{align*}
    Therefore $\E[X] \ge \delta n / 8.$ Since $X \le |I| \le 3Ln,$ we have 
    \[ \Pr[X \ge \delta n / 16] \ge \frac{\delta}{48L}. \]
    If $X \ge 1,$ let (the random variable) $m^*$ denote the minimum value of $m \in I$ such that $G_m \unqembed H$. By the law of total probability, there exists a graph $G^*$ such that
    \[ \Pr_{\widetilde{G}} \left[ X \ge \delta n / 16 \, \vert \, G_{m^*} = G^*\right] \ge \frac{\delta}{48L}. \]

    Let $m_2 = m^* + \delta n / 16-1$. Observe that the number of embeddings of $G_m$ into $H$ is non-increasing in $m$. Consequently, the set of $m$ such that $G_m\unqembed H$ forms an interval. Therefore, if $G_{m^*} = G^*$ and $X \ge \delta n /16$ then $G_{m_2} \unqembed H$.
    It follows that    
    \[ \Pr\left[G_{m_2} \unqembed H \, \vert \, G_{m^*} = G^* \right] \ge \Pr\left[X \ge \delta n /16 \, \vert \, G_{m^*} = G^* \right] \ge \frac{\delta}{48L}. \]

    Let $\pi$ be the unique embedding from $G^*$ into $H$. Since $G_{m^*} \subseteq G_{m_2},$ if $G_{m_2} \unqembed H$, then it has the same unique embedding $\pi$. Note that, given $G_{m^*} = G^*,$ the graph $G_{m_2}$ is uniformly distributed among all supergraphs of $G^*$ with $m_2$ edges. Therefore,
    \begin{align*}
        \Pr\big[G_{m_2} \unqembed H \, \vert \, G_{m^*} = G^*\big] = \frac{\binom{e(H) - m^*}{m_2 - m^*}}{\binom{N-m^*}{m_2-m^*}} \le \left( \frac{e(H) - m^*}{N - m^*} \right)^{m_2 - m^*} \le \left( \frac{e(H)}{N} \right)^{\delta n /16-1}.
    \end{align*}

    Assume that $e(H) \le N - Cn$ for large enough $C = C(\delta)$. Then,
    \[ \Pr\big[G_{m_2} \unqembed H \, \vert \, G_{m^*} = G^*\big] \le (1 - Cn / N)^{\delta n / 17} \le e^{-\frac{Cn^2 \delta}{17N}} < \frac{\delta}{48L}, \]
    where we took $C$ to be large enough, a contradiction. Hence, $e(H) \ge \binom{n}{2} - Cn$, as claimed.    
\end{proof}
\subsection{Very Dense $H$}
We use Azuma's inequality in the following form. 
\begin{theorem}[e.g. \cite{janson-luczak-ruczinski}]\label{thm: azumas}
    Let $Z_1,\ldots, Z_m$ be independent random variables, with $Z_i$ taking values in a set $\Lambda_i$. Assume that $f:\ \Lambda_1\times\ldots\times\Lambda_m\rightarrow \mathbb{R}$ satisfies, for some values $b_i$, $i=1,\ldots,m$, the following condition:
    \begin{enumerate}[label=(\Alph*)]
        \item If two vectors $z,z'\in \Lambda_1\times\ldots\times\Lambda_m$ differ only in the $i$th coordinate, then $|f(z)-f(z')|\leq b_i$.
    \end{enumerate}
    Then, the random variable $X=f(Z_1.\ldots,Z_m)$ satisfies, for any $t\geq 0$,
    \begin{align*}
        \Pr[X\leq \mathbb{E}[X]-t]\leq e^{\frac{-2t^2}{\sum_{i=1}^m b_i^2}}.
    \end{align*}
\end{theorem}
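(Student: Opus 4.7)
The plan is to derive \Cref{thm: azumas} via the standard Doob martingale construction combined with Hoeffding's lemma. Define $X_i := \mathbb{E}[X \mid Z_1, \ldots, Z_i]$ for $0 \le i \le m$, so that $X_0 = \mathbb{E}[X]$, $X_m = X$, and $(X_i)_{i=0}^m$ is a martingale with respect to the filtration generated by $Z_1, \ldots, Z_i$. Setting $D_i := X_i - X_{i-1}$, telescoping gives $X - \mathbb{E}[X] = \sum_{i=1}^m D_i$.

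The key step is to show that, conditional on $Z_1 = z_1, \ldots, Z_{i-1} = z_{i-1}$, the random variable $D_i$ lies in an interval of length at most $b_i$. Introducing the partial expectation $g_i(z_1, \ldots, z_i) := \mathbb{E}[f(z_1, \ldots, z_i, Z_{i+1}, \ldots, Z_m)]$, independence of the $Z_j$'s implies that $X_i = g_i(Z_1, \ldots, Z_i)$, while $X_{i-1}$ equals the average of $g_i(z_1, \ldots, z_{i-1}, \cdot)$ taken with respect to $Z_i$. Condition (A), applied pointwise inside the expectation defining $g_i$, yields $|g_i(z_1, \ldots, z_{i-1}, z) - g_i(z_1, \ldots, z_{i-1}, z')| \le b_i$ for all $z, z' \in \Lambda_i$, so $D_i$ lies in an interval of length at most $b_i$ and has conditional mean zero.

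With this conditional range bound in place, Hoeffding's lemma gives $\mathbb{E}[e^{-sD_i} \mid Z_1, \ldots, Z_{i-1}] \le e^{s^2 b_i^2 / 8}$ for every $s \ge 0$. Iterating the tower property produces $\mathbb{E}[e^{-s(X - \mathbb{E}[X])}] \le \exp\bigl(s^2 \sum_{i=1}^m b_i^2 / 8\bigr)$, and Markov's inequality applied to $e^{-s(X - \mathbb{E}[X])}$ at the threshold $e^{st}$, optimized at $s = 4t / \sum_{i=1}^m b_i^2$, yields the stated bound $e^{-2t^2 / \sum_{i=1}^m b_i^2}$. The main technical obstacle is the conditional range bound on $D_i$: it depends crucially on independence, which guarantees that $X_i$ and $X_{i-1}$ differ only via a change in the $i$th coordinate of $g_i$, allowing condition (A) to control the length (not merely twice the length) of the interval containing $D_i$; this is what ultimately gives the McDiarmid-style constant of $2$ in the exponent rather than the weaker $1/2$ coming from the naive Azuma bound.
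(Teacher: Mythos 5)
The paper does not prove this statement; it quotes it as a standard result from the cited reference \cite{janson-luczak-ruczinski}. Your argument is the standard (and correct) proof of this bounded-differences inequality --- Doob martingale, the one-sided range bound of length $b_i$ on each increment via independence, Hoeffding's lemma, and optimized exponential Markov --- which is exactly the proof given in the standard references, so there is nothing to compare beyond noting that your derivation is sound and correctly accounts for the constant $2$ in the exponent.
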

\begin{lemma}\label{lem: linear is not it}
    Let $C \ge 0$ be a constant. Then, for any graph $H$ on $n$ vertices with $e(H)\geq \binom{n}{2}-Cn$, it holds that
    \[\Pr_{G \sim \calG(n,1/2)}[G \unqembed H ] = o(1).\]
\end{lemma}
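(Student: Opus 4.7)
The plan is to bound $\Pr_{G \sim \calG(n,1/2)}[G \unqembed H]$ by a union bound over the $n!$ bijections $V(G) \to V(H)$. After identifying $V(G) = V(H) = [n]$, the symmetry of $\calG(n,1/2)$ under relabellings gives $\Pr[G \unqembed H] = n! \cdot \Pr[\mathrm{id} \text{ is the unique embedding}]$. We factor this as $n! \cdot \Pr[\mathrm{id} \text{ embeds}] \cdot q$, where $\Pr[\mathrm{id} \text{ embeds}] = 2^{-(\binom{n}{2} - e(H))}$ and $q$ is the conditional probability that no other embedding exists. Since $e(H) \geq \binom{n}{2} - Cn$, reaching $o(1)$ requires us to show $q = o(2^{\binom{n}{2}-e(H)}/n!) \leq 2^{Cn}/n! = e^{-n\log n + O(n)}$, i.e., super-polynomially small concentration.

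To establish this, we condition on $\mathrm{id}$ being an embedding, so $G$ is the random subgraph of $H$ obtained by including each $e \in E(H)$ independently with probability $1/2$, and we seek many transpositions $\tau_{uv}$ that also yield embeddings. Call a vertex $u$ of $H$ \emph{good} if its non-degree $d^c(u) := n - 1 - d_H(u)$ is at most $K$, for a large constant $K = K(C)$. Since $\sum_v d^c(v) = 2(\binom{n}{2} - e(H)) \leq 2Cn$, Markov's inequality gives a set $T$ of at least $n - 2Cn/K$ good vertices. A short case analysis on the effect of $\tau_{uv}$ on each edge of $G$ shows that, for $u, v \in T$, the transposition $\tau_{uv}$ is another embedding iff $N_G(u) \cap \bar{N}_H(v) = \emptyset$ and $N_G(v) \cap \bar{N}_H(u) = \emptyset$. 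This is a conjunction of at most $2K$ independent constraints, each satisfied with probability $1/2$ (those where the relevant pair actually lies in $E(H)$), so $\Pr[\tau_{uv} \text{ works}] \geq 2^{-2K}$ and the expected number $X$ of good pairs whose transposition gives an embedding is $\Omega(n^2)$.

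The key step applies Azuma's inequality (\Cref{thm: azumas}) to $X$, viewed as a function of the independent variables $(Z_e)_{e \in E(H)} \sim \mathrm{Bern}(1/2)$, to conclude $\Pr[X=0] \leq e^{-\Omega(n^2)} = o(1/n!)$. The crucial quantity is the edge influence: $Z_e$ with $e = uw$ affects $Y_{\{u,v\}}$ only when $w \in \bar{N}_H(v)$ (and by symmetry when $u$ plays the role of $w$). When both endpoints of $e$ are good, the number of affected pairs is at most $|\bar{N}_H(u)| + |\bar{N}_H(w)| \leq 2K$, so its influence $b_e$ is $O(K)$. If all edges satisfied this bound, then $\sum_e b_e^2 = O(n^2)$, and Azuma would yield $\Pr[X \leq \E[X]/2] \leq \exp(-\Omega(\E[X]^2/\sum_e b_e^2)) = e^{-\Omega(n^2)}$ as needed.

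The main obstacle is that an edge $e = uw$ with $w$ of very high non-degree can a priori affect up to $|\bar{N}_H(w) \cap T|$ good pairs, potentially as many as $\Omega(n)$, which inflates $\sum_e b_e^2$ past what Azuma can handle. We plan to address this by a preliminary two-stage revelation: first expose the $G$-edges incident to bad vertices and argue, using the non-degree budget $\sum_v d^c(v) \leq 2Cn$ together with a Chernoff-type bound, that with probability $1 - o(1/n!)$ at least $\Omega(n^2)$ good pairs have all of their bad-vertex critical constraints already satisfied; then apply Azuma on the remaining good--good edges, where every residual influence is $O(K)$. Balancing the parameters in the revelation step so that both enough candidate pairs survive and the failure probability stays below $1/n!$ is expected to be the most delicate part of the argument.
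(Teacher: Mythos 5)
Your skeleton matches the paper's: a union bound over all $n!$ bijections, reduced via a switching argument to showing that, for a fixed bijection, some transposition of two low-non-degree vertices yields a second embedding, with Azuma supplying the $e^{-\omega(n\log n)}$ failure probability. The pieces you do carry out are correct: the factorization $n!\cdot 2^{-(\binom{n}{2}-e(H))}\cdot q$, the characterization of when $\tau_{uv}$ is another embedding, the bound $\Pr[\tau_{uv}\text{ works}]\ge 2^{-2K}$, and the $O(K)$ influence of edges between two good vertices. You have also correctly isolated the one real difficulty: an edge $uw$ with $w$ of large non-degree can influence up to $|\bar N_H(w)\cap T|=\Omega(n)$ of the indicators, which ruins $\sum_e b_e^2$.

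The gap is that the two-stage revelation does not resolve this difficulty; it relocates it. For stage 2 to give failure probability $o(1/n!)=e^{-(1+o(1))n\log n}$, you need the number of surviving pairs to be at least roughly $n^{3/2}$ (so that $\E[X]^2/\sum_e b_e^2\gg n\log n$), and you need this to hold after stage 1 with probability $1-o(1/n!)$. But the survivor count is itself a function of the stage-1 coins whose influences are exactly the large ones you are trying to avoid: a single coin $Z_{uw}$ with $w$ bad kills all pairs $\{u,v\}$ with $v\in\bar N_H(w)\cap T$, i.e.\ up to $\Omega(n)$ pairs at once, so Azuma/Chernoff for stage 1 only yields $e^{-O(n)}$; and a first-moment count of killed pairs fails because $\sum_{w}\bar d_w\cdot|N_G(w)|$ can exceed $\binom{n}{2}$. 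The paper sidesteps this with a deterministic combinatorial step (Claim~\ref{claim:set-T}): it iteratively shrinks the candidate set $T$ so that every $w\notin T$ either has $|\bar N_H(w)\cap T|<n/\log^{6D}n$ (small influence) or satisfies $T\subseteq\bar N_H(w)$ --- and in the latter case $w$ imposes \emph{no} constraints on pairs inside $T$, because the constraint ``$uw\notin E(G)$'' is vacuous when $uw\notin E(H)$. The iteration terminates after at most $4C$ rounds because each surviving vertex of $T$ accumulates the discarded vertices $v_i$ as $H$-non-neighbours while having bounded non-degree. Some idea of this kind is what your stage 1 is missing; as written, the ``most delicate part'' you defer is precisely the mathematical content of the lemma.
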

\begin{proof}
    Note that we may simply assume that $C\geq 1$. Let $H^c$ denote the complement of $H$. Since for $G\sim \mathcal{G}(n,1/2)$, it holds that the complement of $G$ follows the same distribution, the statement is equivalent to showing that with probability $o(1)$, for $G\sim \mathcal{G}(n,1/2)$, $H^c\unqembed G$ (where the graph $G$ here is the complement of the graph $G$ in the statement). Let $\pi$ be a bijection from $V(H^c)$ to $V(G)$. For distinct $u,v\in V(G)$, we say that the unordered pair $\{u,v\}$ is a \emph{$\pi$-switch} if    
    \[ N_G(v) \supseteq \pi(N_{H^c}(\pi^{-1}(u))\setminus N_{H^c}(\pi^{-1}(v))) \text{ and } N_G(u) \supseteq \pi(N_{H^c}(\pi^{-1}(v))\setminus N_{H^c}(\pi^{-1}(u))). \]
    Observe that if $\pi$ is an embedding of $H^c$ into $G$ and $\{u,v\}$ is a $\pi$-switch, then there is another embedding $\pi'$ of $H^c$ into $G$ obtained by switching the roles of $u$ and $v$. Formally, $\pi'(\pi^{-1}(u)) = v, \pi'(\pi^{-1}(v)) = u$ and $\pi'(w) = \pi(w)$ for $w \in V(H^c) \setminus \{ \pi^{-1}(u), \pi^{-1}(v)\}.$

    Thus, it is enough to prove that with high probability, $G\sim \mathcal{G}(n,1/2)$ contains a $\pi$-switch for every bijection $\pi \colon V(H^c) \rightarrow V(G)$. We show that if $n$ is large enough, then for any fixed $\pi$, $G$ contains a $\pi$-switch with probability at least $1-e^{-n\log n}$, and the statement then follows by a union bound over all $\pi$ since $n!\cdot e^{-n\log n} = o(1).$ Let us remark here that we do not need to assume that $\pi$ is an embedding of $H^c$ into $G$.

    Fix $\pi$ to be an arbitrary bijection from $V(H^c)$ to $V(G)$. Let $A\subseteq V(H^c)$ be the set of vertices of degree at least $4C$ in $H^c$ and set $B=V(H^c)\setminus A$. By assumption, $H^c$ has at most $Cn$ edges, implying $|A|\leq n/2$, and thus, $|B|\geq n/2$. Let $B'\subseteq B$ be a maximal independent subset of $B$ and note that $|B'|\geq \frac{n}{2(4C+1)}$. 
    \begin{claim} \label{claim:set-T}
        There exists an integer $D$ with $1\leq D\leq 6^{4C}$ and a set $T\subseteq B'$ with $|T|\geq n/\log^D(n)$ such that for every $v \in V(H) \setminus T$, we have $T \subseteq N_{H^c}(v)$ or $|N_{H_c}(v)\cap T| < n / \log^{6D} n$.
        
    \end{claim}
    \begin{proof}
        We will find the desired set $T$ iteratively. Set $T_0 = B'$ and note that $|T_0|\geq n/\log(n)$ since we may assume that $n$ is large enough. Then, we proceed in steps $i = 0, 1, \dots$ as follows. If there exists $v_i\in V(H) \setminus T_i$ such that $|N_{H_c}(v_i)\cap T_i|\geq n / \log^{6^{i+1}} (n)$, but $T_i\not\subseteq N_{H_c}(v_i)$, we set $T_{i+1}= N_{H_c}(v_i)\cap T_i$ and proceed to step $i+1$. If there exists no such $v_i$, then set $T = T_i$. Note that $|T_{i+1}| < |T_i|$ so the process eventually terminates. Let $\ell$ be the step in which we terminate, so that we set $T = T_\ell$. For each $0 \leq i\leq \ell$, by the definition of $T_i$, we have $|T_i|\geq  n / \log^{6^{i}}(n)$. Note that the claim immediately follows if $\ell \leq 4C$, which we now show. For each $0\leq i\leq \ell-1$, it holds that $T_\ell\subseteq T_{i+1}\subseteq N_{H^c}(v_i)$. Additionally, for $0\leq i<j\leq \ell-1$, we have $v_i \neq v_j,$ since $T_j\subseteq T_{i+1}\subseteq N_{H^c}(v_i)$ but $T_{j}\not\subseteq N_{H^c}(v_j)$. It follows that each vertex in $T_\ell$ contains $v_0,\ldots, v_{\ell-1}$ in its neighborhood in $H^c$. However, $T_\ell\subseteq B$ and therefore, each vertex in $T$ has at most $4C$ neighbors in $H^c$, implying $\ell \le 4C$ and thus completing the proof of the claim.
    \end{proof}
    Let $1\leq D\leq 6^{4C}$ and $T\subseteq B'$ be as provided by Claim~\ref{claim:set-T}. For every pair $\{u,v\}$, where $u,v\in T$ are distinct, let $X_{\{u,v\}}$ be the indicator variable for $\{\pi(u),\pi(v)\}$ being a $\pi$-switch and let $S$ be the sum of all these indicator random variables. Since each vertex in $T$ has at most $4C$ neighbors in $H^c$, we get that $\Pr[X_{\{u,v\}} = 1]\geq 2^{-8C}$ and hence, $\mathbb{E}[S]\geq 2^{-8C}\binom{|T|}{2}$. 

    Observe that $X_{\{u,v\}}$, $u,v\in T$, depends only on edges between $\pi(v)$ and $\pi(N_{H^c}(u)\setminus N_{H^c}(v))$ as well as edges between $\pi(u)$ and $\pi(N_{H^c}(v)\setminus N_{H^c}(u))$. For every $w\in V(H)$, let us denote by $d_w = |N_{H^c}(w) \cap T|$. For distinct $y,z \in V(G),$ we denote by $b_{yz}$ the number of terms of $S$ that depend on the presence of the edge $yz$ in $G$.
    
    Note that if $d_w \ge n / \log^{6D} n,$ then $T \subseteq N_{H^c}(w),$ so $b_{\pi(w)\pi(u)} = 0$ for all $u \in V(H^c) \setminus \{w\}.$ Furthermore, recall that $T\subseteq B'$ is an independent set. Therefore, $b_{\pi(u)\pi(v)} =0$ for any $u, v \in T$. If $u, v \not\in T,$ then also trivially $b_{\pi(u)\pi(v)} = 0$. Finally, if $w \not\in T,$ then for any $u \in T,$ presence of the edge $\pi(w)\pi(u)$ can only influence variables $X_{\{u,v\}}$ for $v \in N_{H^c}(w)$. Thus, for $w \not\in T, u \in T,$ we have $b_{\pi(u)\pi(w)} \le d_w.$ Denote by $R$ the set of vertices $w$ in $V(H) \setminus T$ with $d_w < n / \log^{6D} n.$ Observe that 
    \[ \sum_{w \in R} d_w^2 \le  n / \log^{6D} n \cdot \sum_{w \in R} d_w \le 2Cn^2 / \log^{6D} n. \]
    Putting these observations together, we have
    \begin{align*}    
        \sum_{y, z \in V(G)} b_{yz}^2 \leq |T| \cdot\sum_{w \in R}  d_w^2 \le |T| \cdot 2Cn^2/ \log^{6D} n \le \frac{2Cn^3}{ \log^{6D} n}.
    \end{align*}    
    By \Cref{thm: azumas}, it follows that
    \begin{align*}
    \Pr[S = 0] &\leq \exp\left(\frac{-2 (\E[X])^2}{\sum_{y, z \in V(G)} b_{yz}^2} \right) \le \exp\left( \frac{-\left(2^{-8C} \binom{|T|}{2} \right)^2 \cdot \log^{6D} n}{C n^3}\right)\\
    &\leq \exp\left(2^{-16C - 4} n \log^{2D} n / C\right) \le e^{-n \log n},
    \end{align*}
    finishing the proof.
\end{proof}
\begin{proof}[Proof of \Cref{thm: main}]
    For the sake of contradiction, suppose there is a constant $\delta > 0$ such that $f(n)\geq \delta$ for all large enough $n$. So assume that $n$ is large enough and let $H$ be a graph on $n$ vertices satisfying $f(H) \ge \delta$. Applying Lemma~\ref{lem:can-look-at-random-graph}, we obtain $\Pr_{G \sim \calG(n, 1/2)}[G \unqembed H] \ge \delta / 2.$ By Lemma~\ref{lem:constant-non-degree}, we have that $e(H) \ge \binom{n}{2} - Cn,$ for some constant $C = C(\delta)$. Finally, since $n$ is large enough, Lemma~\ref{lem: linear is not it} implies $\Pr_{G \sim \calG(n, 1/2)}[G \unqembed H] < \delta/2,$ a contradiction.    
\end{proof}

\section{Concluding remarks}
Though we have not checked it very thoroughly, we think a careful analysis of our argument shows that $f(n) = O\left( \frac{\log \log \log n}{\log \log n} \right)$. Since we do not believe this to be close to the truth, we have chosen to present the proof of $f(n) = o(1)$ for the sake of clarity of presentation.

Recall that we crucially used the anti-concentration of the number of edges in $\calG(n, 1/2)$, so in order to prove that $f(n) = o(1/n),$ new ideas are needed. Furthermore, it seems hard to push beyond\linebreak$f(n) = O(\log \log n / \log n)$ by considering a random graph process as we do. For $f(n) = \log \log n / \log n,$ the analogue of Lemma~\ref{lem:constant-non-degree} would only imply that $e(H) \ge \binom{n}{2} - O(n \log n)$. However, this does not seem to be particularly useful and, in fact, it can be deduced much more simply as otherwise in expectation $\calG(n, 1/2)$ is expected to have less than $n!\cdot e^{-n \log n}\ll 1$ embeddings into $H$.

\textbf{Acknowledgement.} We thank Noga Alon for helpful discussions about the problem.

\end{document}